\newtheorem{theorem}{Theorem}
\newtheorem{proposition}[theorem]{Proposition}
\newenvironment{proof}[1][Proof]{\textbf{#1.} }{\ \rule{0.5em}{0.5em}}
\newcommand{\ENT}[1]{ \left\lfloor #1 \right\rfloor}
\newcommand{\Efrac}[2]{\ENT{\frac{#1}{#2}}}
\newcommand{\FRA}[1]{\left\{ #1 \right\}}
\newcommand{\SUMS}[3]{\mathbf{S}_{#1}\left(#2,#3\right)}
\newcommand{\SUMW}[2]{\mathbf{W}_{#1}\left(#2\right)}
\newcommand{\SUMF}[2]{\mathbf{F}_{#1}\left(#2\right)}
\newcommand{\doublepariii}[1]{\left(\!\!\!\left(#1\right)\!\!\!\right)}
\newcommand{\doublepari}[1]{\left(\!\left(#1\right)\!\right)}
\begin{document}

\title{On a family of sums of powers of the floor function and their links with generalized Dedekind sums}
\author{Steven Brown}
\date{July 15, 2025}
\maketitle

\begin{abstract}
In this paper we are concerned with a family of sums involving the floor function. With $r$ a non negative integer and $n$ and $m$ positive integers we consider the sums 
\begin{equation*}
\SUMS{r}{n}{m}:=\sum_{k=1}^{n-1}{\Efrac{km}{n}}^r
\end{equation*}
While a formula for $\mathbf{S}_1$ is well known, we  provide closed-form formulas for $\mathbf{S}_2$ and $\mathbf{S}_3$ as well as the reciprocity laws they satisfy. Additionally, one can find a closed-form formula for the classical Dedekind sum using the Euclidean algorithm. Finally, we provide a general formula for $\mathbf{S}_r$ showing its dependency on generalized Dedekind sums.
\end{abstract}

\section{Introduction}\label{sec:Introduction}

The analysis of the sums $\SUMS{r}{n}{m}$ defined below (see \ref{eq:def_Srnm}) was motivated due to their links with Dedekind sums (see proposition \ref{prop:Srba_rel_Dedekind_sums}) which have applications in many areas of mathematics. A broad overview of the Dedekind sums and their applications can be found in the introduction of \cite{rademacher1972dedekind}, a reference monograph on the topic by Hans Rademacher and Emil Grosswald.

\subsection{Notations} \label{sec:Notations}

For any real number $x$ we denote by $\ENT{x}$ the floor function defined as the greatest integer less than or equal to $x$. For an integer $a$ and a positive integer $b$, we use $a \mod b$ to mean the remainder of $a$ when divided by $b$.

The sums of interest are noted as follows:
\begin{equation} \label{eq:def_Srnm}
\SUMS{r}{n}{m}:=\sum_{k=1}^{n-1}{\Efrac{km}{n}}^r
\end{equation}

We also use the follwing notation for the Faulhaber sums (see \cite{schumacher2016extended}).
\begin{equation}
\SUMF{r}{n}:=\sum_{i=0}^n i^r
\label{eq:def_Frn}
\end{equation}

For positive coprime integers $a$ and $b$ we use the notation $s(b,a)$ as in Rademacher's book \cite{rademacher1972dedekind} to denote the classical Dedekind sum.

\begin{equation}
s(b,a):=\sum_{k=1}^a \doublepariii{\frac{kb}{a}}\doublepariii{\frac{k}{a}}
\label{eq:def_Dedekind_sum}
\end{equation}

with the symbol $\doublepari{x}$ defined by

\begin{equation}
\doublepari{x} := \left\{
    \begin{array}{ll}
        \{x\}-\frac{1}{2} & \mbox{if  } x\notin\mathbb{Z} \\
				0                 & \mbox{if  } x\in\mathbb{Z}
    \end{array}
\right.
\end{equation}

We use as well a definition of generalized Dedekind sums suggested  by Don Zagier in the article \cite{zagier1973higher} (equation 40 page 157). In this definition, $b$ and $n$ are positive integers and the coefficients $a_i$ for $i$ from 1 to $n$ are positive integers coprime with $b$.

\begin{equation}
\delta(b;a_1,\ldots ,a_n) := 2^nb^{n-1}\sum_{k=1}^{b-1}\prod_{i=1}^n \doublepariii{\frac{ka_i}{b}}
\label{eq:def_generalized_Dedekind_sum}
\end{equation}

\subsection{General considerations}\label{sec:General_considerations}

\begin{proposition}\label{prop:rel_Srba_Siba} Let $a$, $b$ and $r$ be positive coprime integers. If $a$ and $b$ are coprime then
\begin{equation}
\SUMS{r}{b}{a}+\sum_{i=0}^r(-1)^{i+1}\binom{r}{i}(a-1)^{r-i}\SUMS{i}{b}{a}=0
\label{eq:rel_Srba_Siba}
\end{equation} 
\end{proposition}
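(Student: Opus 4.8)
The plan is to exploit a reflection symmetry of the summand under the substitution $k \mapsto b-k$, which is a bijection of the index set $\{1,\ldots,b-1\}$ onto itself. The whole identity will fall out of a single pointwise relation between $\Efrac{(b-k)a}{b}$ and $\Efrac{ka}{b}$, followed by a binomial expansion.

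First I would establish the pointwise identity
\begin{equation*}
\Efrac{(b-k)a}{b} = (a-1) - \Efrac{ka}{b}, \qquad 1 \le k \le b-1.
\end{equation*}
To prove it, write $\Efrac{(b-k)a}{b} = \ENT{a - \frac{ka}{b}} = a + \ENT{-\frac{ka}{b}}$, using that $a$ is an integer. Because $a$ and $b$ are coprime and $1 \le k \le b-1$, the number $\frac{ka}{b}$ is never an integer, so $\ENT{-\frac{ka}{b}} = -\ENT{\frac{ka}{b}} - 1$. Substituting this back yields the claimed relation.

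Next I would apply the reflection to the defining sum. Since $k \mapsto b-k$ permutes $\{1,\ldots,b-1\}$, we have $\SUMS{r}{b}{a} = \sum_{k=1}^{b-1} \Efrac{(b-k)a}{b}^{\,r} = \sum_{k=1}^{b-1} \left((a-1) - \Efrac{ka}{b}\right)^r$. I would then expand the $r$-th power by the binomial theorem, interchange the two finite sums, and recognize $\sum_{k=1}^{b-1}\Efrac{ka}{b}^{\,i} = \SUMS{i}{b}{a}$, obtaining
\begin{equation*}
\SUMS{r}{b}{a} = \sum_{i=0}^r (-1)^i \binom{r}{i}(a-1)^{r-i}\,\SUMS{i}{b}{a}.
\end{equation*}
Transposing and using $(-1)^{i+1} = -(-1)^i$ turns this into exactly \eqref{eq:rel_Srba_Siba}.

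The only delicate point is the non-integrality of $\frac{ka}{b}$, which is precisely where the coprimality of $a$ and $b$ enters: it is what guarantees the off-by-one correction $\ENT{-x} = -\ENT{x}-1$ rather than $\ENT{-x}=-\ENT{x}$, and hence produces the shift by $a-1$ instead of by $a$. Everything else is a routine binomial expansion and an interchange of finite sums, so I expect no further obstacles.
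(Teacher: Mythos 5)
Your proof is correct and follows essentially the same route as the paper: both rest on the reflection $k \mapsto b-k$, the identity $\Efrac{(b-k)a}{b} = (a-1)-\Efrac{ka}{b}$ (valid because coprimality makes $\frac{ka}{b}$ non-integral), and a binomial expansion. The only cosmetic difference is that the paper sums the pointwise differences $\Efrac{ka}{b}^r - \Efrac{(b-k)a}{b}^r$ and observes that they telescope to zero, whereas you substitute the reflection directly into the sum; the content is identical.
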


\begin{proof}
Let $k$ be an integer satisfying $0<k<b$. Let $u_k:=\ENT{\frac{ka}{b}}$ and $v_k:=\ENT{\frac{-ka}{b}}$. The ratio $\frac{ka}{b}$ is not in $\mathbb{Z}$ and therefore, as a property of the floor function\footnote{$\forall x \in \mathbb{R}, \quad \left\lfloor x\right\rfloor + \left\lfloor -x \right\rfloor = 
\left\{
    \begin{array}{ll}
				0  & \mbox{if  } x\in\mathbb{Z} \\
        -1 & \mbox{if  } x\notin\mathbb{Z}
    \end{array}
\right.$}, we have $u_k+v_k=-1$

\begin{align*}
\ENT{\frac{ka}{b}}^r-\ENT{\frac{(b-k)a}{b}}^r
&= {u_k}^r-(a+v_k)^r \\
&= {u_k}^r-(a-1-u_k)^r \\
&= {u_k}^r-\sum_{i=0}^r\binom{r}{i}(-u_k)^i(a-1)^{r-i} \\
&= {u_k}^r+\sum_{i=0}^r(-1)^{i+1}\binom{r}{i}(a-1)^{r-i}{u_k}^i \\
\end{align*}

We can take the sum of the last equation for $k$ from 1 to $b-1$. The left hand side sums to zero as the difference of two equal sums (the two sums index are in reverse order). On the right hand side we recognize the sums $\SUMS{i}{b}{a}$ for $i$ from 0 to $r$.
\end{proof}

\begin{proposition}\label{prop:rel_Srnm_Siba} Let $m$, $n$ and $r$ be positive integers. let $d=\gcd(n,m)$ such that there exist two positive coprime integers $b$ and $a$ such that $n=db$ and $m=da$. We have
\begin{equation}
\SUMS{r}{n}{m} = a^r\SUMF{r}{d-1}+\sum_{k=0}^{r}\binom{r}{k}a^k\SUMF{k}{d-1}\SUMS{r-k}{b}{a}
\label{eq:rel_Srnm_Siba}
\end{equation}
\end{proposition}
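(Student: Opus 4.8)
The plan is to reduce the fraction inside the floor and then split the summation index according to its residue modulo $b$. Since $n=db$ and $m=da$, we have $\frac{km}{n}=\frac{ka}{b}$, so that $\SUMS{r}{n}{m}=\sum_{k=1}^{db-1}{\Efrac{ka}{b}}^{r}$. Because $r\ge 1$, the value $k=0$ would contribute $0^{r}=0$, so I may just as well let $k$ run from $0$ to $db-1$. Writing $k=jb+\ell$ with $0\le j\le d-1$ and $0\le \ell\le b-1$ sets up a bijection between these pairs $(j,\ell)$ and the integers $\{0,1,\dots,db-1\}$; this is the decomposition I would use to separate the ``coarse'' contribution (governed by $a$ and $d$) from the ``fine'' one (governed by $a$ and $b$).

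Next, since $ja$ is an integer, I would pull it out of the floor to obtain $\Efrac{ka}{b}=ja+\Efrac{\ell a}{b}$, and then expand the $r$-th power by the binomial theorem:
\begin{equation*}
\SUMS{r}{n}{m}=\sum_{j=0}^{d-1}\sum_{\ell=0}^{b-1}\left(ja+\Efrac{\ell a}{b}\right)^{r}=\sum_{i=0}^{r}\binom{r}{i}a^{i}\left(\sum_{j=0}^{d-1}j^{i}\right)\left(\sum_{\ell=0}^{b-1}{\Efrac{\ell a}{b}}^{\,r-i}\right),
\end{equation*}
where I have interchanged the order of summation to factor the $j$-dependence from the $\ell$-dependence. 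The inner sum over $j$ is exactly $\SUMF{i}{d-1}$ by the definition \eqref{eq:def_Frn}, which accounts for the Faulhaber factors appearing in the claimed identity.

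The one delicate point, which I expect to be the main obstacle, is matching the inner $\ell$-sum to $\SUMS{r-i}{b}{a}$: the latter starts at $\ell=1$, whereas my sum includes the term $\ell=0$, equal to $0^{\,r-i}$. This boundary term vanishes for every $i<r$, so for those indices the $\ell$-sum is precisely $\SUMS{r-i}{b}{a}$; but when $i=r$ it equals $0^{0}=1$, so the $\ell$-sum there is $\SUMS{0}{b}{a}+1=(b-1)+1$. Consequently the full expression reproduces $\sum_{i=0}^{r}\binom{r}{i}a^{i}\SUMF{i}{d-1}\SUMS{r-i}{b}{a}$ plus one surplus contribution $\binom{r}{r}a^{r}\SUMF{r}{d-1}=a^{r}\SUMF{r}{d-1}$ arising solely from the cell $i=r$, $\ell=0$. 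After renaming the dummy index $i$ to $k$, this surplus is exactly the stand-alone summand $a^{r}\SUMF{r}{d-1}$ in \eqref{eq:rel_Srnm_Siba}, so that careful bookkeeping of the $0^{0}=1$ convention at the lower endpoint is what reconciles the two sides.
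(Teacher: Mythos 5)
Your proof is correct and follows essentially the same route as the paper's own: the same reduction of $\frac{km}{n}$ to $\frac{ka}{b}$, the same decomposition of the index as $k=jb+\ell$ with $0\le j\le d-1$, $0\le \ell\le b-1$, the same binomial expansion and factorization into Faulhaber sums, and the same bookkeeping of the boundary term at $i=r$, $\ell=0$ (where $0^{0}=1$ produces the stand-alone $a^{r}\SUMF{r}{d-1}$), which is precisely the point the paper also singles out. No gaps to report.
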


\begin{proof}
\begin{align*}
\SUMS{r}{n}{m}
&= \sum_{k=0}^{n-1}\ENT{\frac{ka}{b}}^r \\
&= \sum_{i=0}^{d-1}\sum_{j=0}^{b-1}\ENT{\frac{(ib+j)a}{b}}^r \\
&= \sum_{i=0}^{d-1}\sum_{j=0}^{b-1}\sum_{k=0}^r\binom{r}{k}i^ka^k\ENT{\frac{ja}{b}}^{r-k} 
\end{align*}
 
The case $k=r$ needs attention since $\sum_{j=0}^{b-1}\ENT{\frac{ja}{b}}^{r-r}=1+\SUMS{0}{b}{a}$

\begin{align*}
\SUMS{r}{n}{m} 
&= a^r\SUMF{r}{d-1}(1+\SUMS{0}{b}{a})+\sum_{k=0}^{r-1}\binom{r}{k}a^k\SUMF{k}{d-1}\SUMS{r-k}{b}{a} \\
&= a^r\SUMF{r}{d-1}+\sum_{k=0}^{r}\binom{r}{k}a^k\SUMF{k}{d-1}\SUMS{r-k}{b}{a} \\
\end{align*}

\end{proof}

Equation \ref{eq:rel_Srnm_Siba} shows that in general (whether $m$ and $n$ are coprime or not) the formula of $\SUMS{r}{n}{m}$ only depend on $\SUMS{i}{b}{a}$ for $0\leq i \leq r$ and some known Faulhaber sums. Therefore it is enough to focus on studying $\SUMS{r}{b}{a}$ with $b$ and $a$ positive coprime integers.

\section{Formulas for $\mathbf{S}_i$ for $0\leq i\leq 3$}

\subsection{A formula for $\SUMS{1}{n}{m}$}\label{sec:S1nm}

A formula for $\SUMS{1}{n}{m}$ is provided and proved in \cite{polezzi1997Geometrical} and also in \cite{graham1994concrete} page 94.
If $m$ and $n$ are positive integers and if $d=\gcd(m,n)$ then

\begin{equation}
\SUMS{1}{n}{m} = \frac{(m-1)(n-1)}{2}+\frac{d-1}{2} 
\label{eq:formula_S1nm}
\end{equation}

We give here a first\footnote{a second proof is given in section \ref{sec:Srba_fn_gen_DedekindSums}} alternative proof of equation \ref{eq:formula_S1nm}. Let $a$ and $b$ be the positive coprime integers defined by $m=da$ and $n=db$. From proposition \ref{prop:rel_Srba_Siba} with $r=1$ we have

\begin{equation*}
2\SUMS{1}{b}{a} - (a-1)\SUMS{0}{b}{a} = 0
\end{equation*}

It is clear that

\begin{equation}
\SUMS{0}{b}{a}=b-1
\label{eq:formula_S0ba}
\end{equation}

and therefore

\begin{equation}
\SUMS{1}{b}{a} = \frac{(a-1)(b-1)}{2}
\label{eq:formula_S1ba}
\end{equation}

We write now equation \ref{eq:rel_Srnm_Siba} from proposition \ref{prop:rel_Srba_Siba} with $r=1$

\begin{equation*}
\SUMS{1}{n}{m} = a\SUMF{1}{d-1}+\SUMF{0}{d-1}\SUMS{1}{b}{a}+a\SUMF{1}{d-1}\SUMS{0}{b}{a}
\end{equation*}

This equation gives equation \ref{eq:formula_S1nm} knowing that $\SUMF{0}{d-1}=d$ and $\SUMF{1}{d-1}=\frac{(d-1)d}{2}$.

\subsection{A formula for $\SUMS{2}{n}{m}$} 

In this section we carry out a direct calculation of $\SUMS{2}{n}{m}$ and establish an equation involving another sum of interest that will be studied separately. We have the positive integers $m$, $n$, $a$, $b$, $d$ and $k$ such that $m=da$, $n=db$ and $a$ and $b$ are coprime. We write now equation \ref{eq:rel_Srnm_Siba} from proposition \ref{prop:rel_Srnm_Siba} with $r=2$.

\begin{multline*}
\SUMS{2}{n}{m} = a^2\SUMF{2}{d-1} + \\ \SUMF{0}{d-1}\SUMS{2}{b}{a} + 2a\SUMF{1}{d-1}\SUMS{1}{b}{a} + a^2\SUMF{2}{d-1}\SUMS{0}{b}{a}
\end{multline*}

Since all is known apart from $\SUMS{2}{b}{a}$, we have

\begin{equation*} \label{eq:rel_S2np_S2ba}
\SUMS{2}{n}{m} = \frac{(d-1)m}{6}\left((2d-1)ab+3(a-1)(b-1)\right)+d\SUMS{2}{b}{a}
\end{equation*}

Since we have

\begin{equation} \label{eq:fond}
\ENT{\frac{ka}{b}} = \frac{ka}{b} - \FRA{\frac{ka}{b}}
\end{equation}

Summing the square of equation \ref{eq:fond} for $k$ from $1$ to $b-1$ leads to

\begin{align*}
\SUMS{2}{b}{a} 
&= \sum_{k=1}^{b-1}\left({\frac{ka}{b}}\right)^2 + \sum_{k=1}^{b-1}\FRA{\frac{ka}{b}}^2 - 2 \sum_{k=1}^{b-1}\frac{ka}{b}\FRA{\frac{ka}{b}} \\
&= \frac{1+a^2}{b^2}\SUMF{2}{b-1}-2\frac{a}{b^2}\sum_{k=1}^{b-1}k(ka\mod b)
\end{align*}

Let us define the function $\SUMW{n}{a,b}$ by the following sum

\begin{equation} \label{eq:Wnab}
\SUMW{n}{a,b} :=\sum_{k=1}^{n-1}(ak\mod n)(bk\mod n)
\end{equation}

We also give a sense to this function when at least one of its arguments is equal to one through this definition

\begin{equation} \label{eq:Wna}
\SUMW{n}{a}:=\SUMW{n}{a,1}
\end{equation}

With this definition we get to

\begin{equation} \label{eq:rel_S2ba_Wba}
\SUMS{2}{b}{a} = \frac{(1+a^2)(b-1)(2b-1)}{6b}-2\frac{a}{b^2}\SUMW{b}{a}
\end{equation}

And therefore

\begin{equation} \label{eq:rel_S2nm_Wba}
\SUMS{2}{n}{m} = \frac{d}{6b}\left((b-1)(2b-1)+a^2(n-1)(2n-1)\right)-\frac{m}{2}(d-1)(b-1)-2\frac{m}{b^2}\SUMW{b}{a}
\end{equation}

This formula together with equation \ref{eq:Wab_fn_ui} from section \ref{sec:Wab_Euclidean_algorithm} provides a closed-form formula for $\SUMS{2}{n}{m}$.

\subsection{A formula for $\SUMS{3}{n}{m}$}

Let us write equation \ref{eq:rel_Srba_Siba} from proposition \ref{prop:rel_Srba_Siba} with $r=3$

\begin{equation*}
2\SUMS{3}{b}{a}-(a-1)^3\SUMS{0}{b}{a}+3(a-1)^2\SUMS{1}{b}{a}-3(a-1)\SUMS{2}{b}{a} = 0
\end{equation*}

By means of equations \ref{eq:formula_S0ba}, \ref{eq:formula_S1ba} and \ref{eq:rel_S2ba_Wba} the above equation gives:

\begin{equation}
\SUMS{3}{b}{a}=\frac{1}{4b}(b-1)(a-1)\left((b-1)(1+a^2)+2ab\right)-\frac{3}{b^2}a(a-1)\SUMW{b}{a}
\label{eq:rel_S3ba_Wba}
\end{equation}

We now write equation \ref{eq:rel_Srnm_Siba} from proposition \ref{prop:rel_Srnm_Siba} with $r=3$. There is

\begin{multline*}
\SUMS{3}{n}{m} = a^3\SUMF{3}{d-1}+\SUMF{0}{d-1}\SUMS{3}{b}{a}+\\
3a\SUMF{1}{d-1}\SUMS{2}{b}{a}+3a^2\SUMF{2}{d-1}\SUMS{1}{b}{a}+a^3\SUMF{3}{d-1}\SUMS{0}{b}{a}
\end{multline*}

Using known formulas for Faulhaber sums as well as equations \ref{eq:formula_S0ba}, \ref{eq:formula_S1ba}, \ref{eq:rel_S2ba_Wba} and \ref{eq:rel_S3ba_Wba} we get to

\begin{multline}\label{eq:rel_S3nm_Wba}
\SUMS{3}{n}{m} = \frac{1}{4}(d-1)am\left((d-1)bm+(2d-1)(a-1)(b-1)\right)\\
+\frac{1}{4b}d(1+a^2)(b-1)\left((d-1)a(2b-1)+(b-1)(a-1)\right)\\
+\frac{1}{2}m(b-1)(a-1)-\frac{3}{b^2}m(m-1)\SUMW{b}{a}
\end{multline}

This formula together with equation \ref{eq:Wab_fn_ui} from section \ref{sec:Wab_Euclidean_algorithm} provides a closed-form formula for $\SUMS{3}{n}{m}$.

\section{Analysis of $\mathbf{W}$}

The objective of this section is to give a closed-form formula for $\SUMW{n}{m}$ in order to finalize the calculation of $\SUMS{2}{n}{m}$ in equation \ref{eq:rel_S2nm_Wba} and of $\SUMS{3}{n}{m}$ in equation \ref{eq:rel_S3nm_Wba}. The analysis of $\mathbf{W}$ provides an elementary proof\footnote{Although it is not fundamentally a new proof, one can see it in section \ref{sec:proof_Dedekind_reciprocity_law}} of the simplest form of Dedekind's reciprocity law.

\subsection{Basic properties of $\mathbf{W}$}

In the previous section we introduced the function $\mathbf{W}$ in equations \ref{eq:Wnab} and \ref{eq:Wna}. The objective of this section is to provide some of its properties. 

\begin{proposition}\label{prop:basic_properties_W}
Let $n$, $a$, $b$ and $c$ be any positive integers we have the following
\begin{center}
\begin{enumerate}[i]
	\item $\SUMW{n}{a,b}=\SUMW{n}{a\mod n,b\mod n}$
	\item If $\gcd(c,n)=1$ we have $\SUMW{n}{ac,bc}=\SUMW{n}{a,b}$
	\item If $ab\mod n=1$ we have $\SUMW{n}{a}=\SUMW{n}{b}$
	\item $\SUMW{n}{a}+\SUMW{n}{n-a}=\frac{1}{2}n^2(n-1)$
\end{enumerate}
\end{center}
\end{proposition}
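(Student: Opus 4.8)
The plan is to establish the four items in the listed order, since each later one leans on the earlier ones and the symmetry $\SUMW{n}{a,b}=\SUMW{n}{b,a}$ that is immediate from the definition \ref{eq:Wnab}.

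For (i), I would simply note that $ak\equiv(a\bmod n)\,k\pmod n$ and likewise for $b$, so $ak\bmod n=(a\bmod n)k\bmod n$ and $bk\bmod n=(b\bmod n)k\bmod n$ termwise. The two sums in \ref{eq:Wnab} then agree term by term, which settles (i) with no further work. For (ii), the key observation is that when $\gcd(c,n)=1$ the map $k\mapsto ck\bmod n$ is a bijection of $\{1,\ldots,n-1\}$ onto itself (nothing is sent to $0$ because $c$ is invertible mod $n$). Setting $k'=ck\bmod n$, one has $ack\equiv ak'\pmod n$ and $bck\equiv bk'\pmod n$, hence $(ack\bmod n)(bck\bmod n)=(ak'\bmod n)(bk'\bmod n)$; reindexing the sum along this permutation converts $\SUMW{n}{ac,bc}$ into $\SUMW{n}{a,b}$.

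For (iii), the hypothesis $ab\bmod n=1$ forces $\gcd(b,n)=1$, so I can apply (ii) with $c=b$ to get $\SUMW{n}{ab,b}=\SUMW{n}{a,1}=\SUMW{n}{a}$. On the other hand, (i) together with $ab\bmod n=1$ gives $\SUMW{n}{ab,b}=\SUMW{n}{ab\bmod n,\,b\bmod n}=\SUMW{n}{1,b}$, and by symmetry $\SUMW{n}{1,b}=\SUMW{n}{b,1}=\SUMW{n}{b}$. Chaining these equalities yields $\SUMW{n}{a}=\SUMW{n}{b}$.

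For (iv), I would use the pairing argument: for $1\le k\le n-1$ the corresponding terms of $\SUMW{n}{a}$ and $\SUMW{n}{n-a}$ are $k\,(ak\bmod n)$ and $k\,((n-a)k\bmod n)$, and since $(n-a)k\equiv -ak\pmod n$ the two residues sum to $n$ as soon as $ak\not\equiv 0\pmod n$. In that case the combined term is $kn$, and $\sum_{k=1}^{n-1}kn=\tfrac12 n^2(n-1)$. The main obstacle — really a caveat — is precisely this nonvanishing condition: the stated closed form requires $\gcd(a,n)=1$, which is exactly what guarantees $ak\bmod n\neq 0$ for every $k\in\{1,\ldots,n-1\}$ so that each pair contributes $n$ rather than $0$. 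Without coprimality the identity breaks (one obtains $\tfrac12 n^2\bigl(n-\gcd(a,n)\bigr)$ instead), so I would prove (iv) under the assumption $\gcd(a,n)=1$, which is the case relevant to the applications in \ref{eq:rel_S2nm_Wba} and \ref{eq:rel_S3nm_Wba} where $a$ and $b$ are coprime.
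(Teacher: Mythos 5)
Your proofs of (i)--(iii) are correct and essentially the paper's: (i) is the termwise congruence, (ii) is reindexing along the bijection $k\mapsto ck\bmod n$ of $\{1,\ldots,n-1\}$, and while for (iii) you chain (ii) (with $c=b$), (i), and the symmetry $\SUMW{n}{a,b}=\SUMW{n}{b,a}$ through the intermediate quantity $\SUMW{n}{ab,b}$ rather than use the paper's direct reindexing of $\SUMW{n}{a}$ by $k\mapsto bk\bmod n$, the same bijection does the work in both versions. The genuinely valuable part of your write-up is the caveat on (iv): you are right that, under the proposition's stated hypothesis (arbitrary positive integers), (iv) is false and $\gcd(a,n)=1$ must be assumed. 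The paper's proof stops at
\begin{equation*}
\SUMW{n}{a}+\SUMW{n}{n-a}=\sum_{k=1}^{n-1}k\left((ak\bmod n)+(-ak\bmod n)\right)
\end{equation*}
and silently evaluates each bracket to $n$, which is valid only when $n\nmid ak$; the $\gcd(a,n)-1$ indices $k$ that are multiples of $n/\gcd(a,n)$ contribute $0$ instead of $kn$. For instance $n=4$, $a=2$ gives $\SUMW{4}{2}+\SUMW{4}{2}=16$, whereas $\tfrac12 n^2(n-1)=24$; your corrected general formula $\tfrac12 n^2\bigl(n-\gcd(a,n)\bigr)$ gives $16$ as it should. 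Restricting (iv) to coprime $a$ and $n$ costs nothing elsewhere in the paper (the properties of $\mathbf{W}$ are only ever applied with coprime arguments), so your version of the statement is the one that should be printed.
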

\begin{proof} 
\begin{enumerate}[i]
  \item comes from $ak\mod n=((a\mod n)k)\mod n$. 
	\item Whenever positive integer $c$ is coprime with $n$, the application $x\mapsto cx\mod n$ is a bijection of $\left\{1,\ldots,n-1\right\}$. In that case we have $\SUMW{n}{ac,bc}=\SUMW{n}{a,b}$.
	\item \begin{align*}
\SUMW{n}{a} 
&= \sum_{k=1}^{n-1}(bk\mod n)(a(bk\mod n)\mod n) \\
&= \sum_{k=1}^{n-1}(bk\mod n)((ab\mod n)k\mod n) \\
&= \SUMW{n}{b}
\end{align*}
For the second equality we use the fact that $ab\mod n=1$ implies that $\gcd(b,n)=1$ and therefore $k\mapsto bk\mod n$ is a bijection of $A_n$.
	\item \begin{equation*}
\SUMW{n}{a}+\SUMW{n}{n-a} = \sum_{k=1}^{n-1} k \left\{(ak \mod n)+(-ak \mod n) \right\} 
\end{equation*}
\end{enumerate}
\end{proof}

\begin{proposition}\label{prop:rel_Wnm_Wba}
Let $d=\gcd(m,n)$, we can write $m=da$ and $n=db$ with $a$ and $b$ coprime. We have the following equation
\begin{equation}
\SUMW{n}{m} = d^2 \SUMW{b}{a} + \frac{1}{4}n^2(d-1)(b-1)
\label{eq:rel_Wnm_Wba}
\end{equation}
 
\end{proposition}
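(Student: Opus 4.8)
The plan is to work directly from the definition $\SUMW{n}{m}=\sum_{k=1}^{n-1}(mk\bmod n)\,k$ and to reduce the reduction modulo $n$ to a reduction modulo $b$. The crucial first step is the scaling identity $mk\bmod n = d\,(ak\bmod b)$. To see this, write $ak=b\left\lfloor ak/b\right\rfloor+(ak\bmod b)$ and multiply by $d$, giving $mk=dak=n\left\lfloor ak/b\right\rfloor + d(ak\bmod b)$; since $0\le d(ak\bmod b)\le d(b-1)<db=n$, the term $d(ak\bmod b)$ is precisely the residue of $mk$ modulo $n$. This turns the sum into $\SUMW{n}{m}=d\sum_{k=1}^{n-1}k\,(ak\bmod b)$.

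Next I would exploit the periodicity of $k\mapsto ak\bmod b$. Because the $k=0$ term vanishes, I extend the range to $0\le k\le db-1$ and reindex by $k=bi+j$ with $0\le i\le d-1$ and $0\le j\le b-1$. Since $a(bi+j)\bmod b = aj\bmod b$ depends only on $j$, the sum separates as $\sum_{i,j}(bi+j)(aj\bmod b)$, which splits into a cross term $b\left(\sum_{i=0}^{d-1} i\right)\!\left(\sum_{j=0}^{b-1}(aj\bmod b)\right)$ and a diagonal term $\left(\sum_{i=0}^{d-1}1\right)\sum_{j=0}^{b-1} j(aj\bmod b)$.

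Finally I would evaluate the two pieces. For the cross term, coprimality of $a$ and $b$ makes $j\mapsto aj\bmod b$ a permutation of $\{0,\dots,b-1\}$ (the same bijection invoked in Proposition \ref{prop:basic_properties_W}), so $\sum_{j=0}^{b-1}(aj\bmod b)=\frac{(b-1)b}{2}$; together with $\sum_{i=0}^{d-1} i=\frac{(d-1)d}{2}$ this gives $\frac{b^2 d(d-1)(b-1)}{4}$. The diagonal term reconstitutes $d\,\SUMW{b}{a}$, since $\sum_{j=0}^{b-1} j(aj\bmod b)=\SUMW{b}{a}$ by definition. Multiplying the bracket by the outer factor $d$ and using $n^2=d^2b^2$ yields exactly $d^2\SUMW{b}{a}+\frac{1}{4}n^2(d-1)(b-1)$. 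The only genuinely delicate point is the residue-scaling identity of the first step; once it is established, the rest is the geometric splitting of the double sum and a single appeal to the coprimality permutation.
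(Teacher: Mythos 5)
Your proposal is correct and follows essentially the same route as the paper's proof: reduce $mk \bmod n = d\,(ak \bmod b)$, reindex over $k = jb + k'$ to split the sum into a diagonal piece giving $d^2\,\SUMW{b}{a}$ and a cross piece evaluated via the permutation $j \mapsto aj \bmod b$. The only difference is one of presentation—you prove the residue-scaling identity explicitly, whereas the paper absorbs it silently into its first reindexing step.
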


\begin{proof}
\begin{align*}
\SUMW{n}{m}
&= \sum_{k=1}^{db-1}k(dak \mod db) \\
&= d \sum_{j=0}^{d-1} \sum_{k=0}^{b-1}(k+jb)(a(k+jb) \mod b) \\
&= d^2 W_{b}(a) + db \sum_{j=0}^{d-1} j\sum_{k=0}^{b-1}(ak \mod b) \\
&= d^2 W_{b}(a) + \frac{1}{4}n^2(d-1)(b-1) \\
\end{align*}
\end{proof}

Note that when $m$ and $n$ are coprime, then $d=1$ and the equation is obviously satisfied. We can now focus on calculating $\SUMW{b}{a}$ when $b$ and $a$ are coprime.

\subsection{Calculation of $\SUMW{b}{a}$ when $a$ and $b$ are coprime}

Given that from property (ii) $\SUMW{b}{a}=\SUMW{b}{a\mod b}$ and that $\gcd(a,b)=1$ implies $\gcd(a\mod b,b)=1$ we can work under the assumption that $0<a<b$ even if it means considering $a\mod b$ instead of $a$. According to definition \ref{eq:Wnab}

\begin{equation*}
\SUMW{b}{a} = \sum_{k=1}^{b-1}k(ak \mod b)
\end{equation*}

Note that the term $k(ak \mod b)$ inside the sum is zero for $k=b$. In particular we can write $\SUMW{b}{a}$ in a slightly different way

\begin{align*}
\SUMW{b}{a} = \sum_{j=0}^{a-1}\sum_{k=\Efrac{jb}{a}+1}^{\Efrac{(j+1)b}{a}}k(ak \mod b) 
\end{align*}

The integer variable $k$ of the inner sum satisfies

\begin{equation*}
\frac{jb}{a} < \Efrac{jb}{a}+1 \leq k \leq \Efrac{(j+1)b}{a} \leq \frac{(j+1)b}{a}
\end{equation*}

Which implies

\begin{equation*}
0 < ka-jb \leq b
\end{equation*}

It should be noted that the right hand side inequality is always a strict inequality apart from the case when $j=a-1$ and $k=b$. Indeed, when $0\leq j<a-1$, the ratio $\frac{(j+1)b}{a}$ is never an integer. If that was the case, knowing that $a$ and $b$ are coprime, the Gauss lemma would imply that $a$ divides $j+1$ which is not possible since $0< j+1 < a$. That means that apart from the case $j=a-1$ and $k=b$ we have

\begin{equation*}
ka\mod b = ka-jb 
\end{equation*}

Now we can write

\begin{equation*}
\SUMW{b}{a} = \left( \sum_{j=0}^{a-1}\sum_{k=\Efrac{jb}{a}+1}^{\Efrac{(j+1)b}{a}}k(ka-jb) \right) - b^2
\end{equation*}

Note that when $j=a-1$ and $k=b$, the expressions $k(ka-jb)=b^2$ and $k(ka\mod b)=0$ are not equal, the reason why we need to substract $b^2$.

The first part of the sum is easily simplified

\begin{align*}
\sum_{j=0}^{a-1}\sum_{k=\Efrac{jb}{a}+1}^{\Efrac{(j+1)b}{a}}ak^2 
&= a \sum_{k=1}^{b}k^2 \\
&= a \frac{b(b+1)(2b+1)}{6}
\end{align*}

We are now left with the calculation of the second term of the sum 

\begin{equation*}
A:=-b \sum_{j=0}^{a-1} j \sum_{k=\Efrac{jb}{a}+1}^{\Efrac{(j+1)b}{a}}k
\end{equation*}

We have
\begin{align*}
A
&= -\frac{b}{2}\sum_{j=0}^{a-1}j\left( \Efrac{(j+1)b}{a}\left(\Efrac{(j+1)b}{a}+1\right)-\Efrac{jb}{a}\left(\Efrac{jb}{a}+1\right) \right) \\
&= -\frac{b}{2}\sum_{j=1}^{a}(j-1) \Efrac{jb}{a}\left(\Efrac{jb}{a}+1\right)+\frac{b}{2}\sum_{j=0}^{a-1}j\Efrac{jb}{a}\left(\Efrac{jb}{a}+1\right) \\
&= \frac{b}{2}\SUMS{2}{a}{b}+\frac{b}{2}\SUMS{1}{a}{b} - \frac{b}{2}(a-1)b(b+1)
\end{align*}

Using equation \ref{eq:formula_S1ba} and after some simplifications we get to

\begin{equation}\label{eq:rel_Wba_S2ab}
\SUMW{b}{a} = \frac{b}{2}\SUMS{2}{a}{b} + \frac{b}{12}(b-1)(2b-1)(3-a)
\end{equation}

\subsection{Formula for $\SUMW{a}{b}$ using the Euclidean algorithm}\label{sec:Wab_Euclidean_algorithm}

The purpose of this section is to provide a closed-form formula for $\SUMW{a}{b}$ for two positive coprime integers $a$ and $b$ with $a<b$. 
From equation \ref{eq:reciprocity_W} and using property (ii) we can write

\begin{equation}
\SUMW{a}{b} = f(a,b)-\left(\frac{a}{b}\right)^2\SUMW{b}{a\mod b}
\label{eq:induction_W}
\end{equation}

With $f$ being the following function
\begin{equation}
f(x,y) := \frac{x}{12y}\left((1+x^2)(1+y^2)-xy(x-3)(y-3)\right)
\label{eq:def_f}
\end{equation}

Let $(u_n)_{n\in \mathbb{N}}$ be the sequence defined by the first two terms, $u_0=a$, $u_1=b$ and the following induction equation $u_{i+2}=u_i \mod u_{i+1}$ for $i\geq 0$. This sequence is the sequence of remainders of Euclid's algorithm (see \cite{damphousseopuscules}). We know that $(u_n)_{n\in \mathbb{N}}$ is strictly decreasing until it reaches $u_N=1=\gcd(a,b)$ for a specific index $N\geq 1$. Then for any $i>N$ we have $u_i=0$. For $i$ from 0 to $N-1$ we have $\gcd(u_i,u_{i+1})=1$ and we can write $N$ times equation \ref{eq:induction_W} for $\SUMW{u_i}{u_{i+1}}$. Compounding these $N$ equations leads to
\begin{equation*}
\SUMW{u_0}{u_1}=\left( \sum_{k=0}^{N-1}{(-1)}^k{\left(\frac{u_0}{u_k}\right)}^2f(u_k,u_{k+1})\right)+{(-1)}^N{\left(\frac{u_0}{u_N}\right)}^2\SUMW{u_N}{u_{N+1}}
\end{equation*}
Given that $u_{N+1}=0$, we have $\SUMW{u_N}{u_{N+1}}=0$ and we are left with
\begin{equation}
\SUMW{a}{b}=\frac{a^2}{12}\sum_{k=0}^{N-1}{(-1)}^k\left(\frac{(1+{u_k}^2)(1+{u_{k+1}}^2)}{u_k u_{k+1}}-(u_k-3)(u_{k+1}-3)\right)
\label{eq:Wab_fn_ui}
\end{equation}

As a consequence, we have closed-form formulas\footnote{Don Zagier in \cite{zagier1973higher} page 166 had already noticed that the classical Dedekind sum was fully determined from their properties and the use of the Euclidean algorithm.} for $\SUMS{2}{b}{a}$, $\SUMS{2}{n}{m}$, $\SUMS{3}{b}{a}$, $\SUMS{3}{n}{m}$, $\SUMW{n}{m}$ and the classical Dedekind sum $s(b,a)$ respectively from equations \ref{eq:rel_S2ba_Wba}, \ref{eq:rel_S2nm_Wba}, \ref{eq:rel_S3ba_Wba}, \ref{eq:rel_S3nm_Wba}, \ref{eq:rel_Wnm_Wba} and \ref{eq:rel_Wab_sba}. 

\section{Reciprocity laws}

With positive and coprime integers $a$ and $b$, the consideration of equations \ref{eq:rel_Wba_S2ab} and \ref{eq:rel_S2ba_Wba} yields easily to the following symetrical equations that could be considered as reciprocity laws:

\begin{theorem}[Reciprocity law for $\mathbf{S}_2$]\label{thm:reciprocity_S2} If $a$ and $b$ are positive coprime integers then
\begin{equation}
a \SUMS{2}{a}{b} + b \SUMS{2}{b}{a} = \frac{1}{6}(a-1)(2a-1)(b-1)(2b-1)
\label{eq:reciprocity_S2}
\end{equation}
\end{theorem}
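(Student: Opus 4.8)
The plan is to eliminate the auxiliary function $\mathbf{W}$ between the two relations already established for it. Equation \ref{eq:rel_S2ba_Wba} expresses $\SUMS{2}{b}{a}$ in terms of $\SUMW{b}{a}$, while equation \ref{eq:rel_Wba_S2ab} expresses the very same $\SUMW{b}{a}$ in terms of $\SUMS{2}{a}{b}$. Substituting the latter into the former will produce a single relation linking $\SUMS{2}{b}{a}$ and $\SUMS{2}{a}{b}$ directly, with $\mathbf{W}$ absent, and the hope is that this relation is already the desired symmetric identity after clearing denominators.

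First I would carry out the substitution. In equation \ref{eq:rel_S2ba_Wba} the function $\SUMW{b}{a}$ is multiplied by $-2a/b^2$; when I replace it using equation \ref{eq:rel_Wba_S2ab}, the $\frac{b}{2}\SUMS{2}{a}{b}$ contribution becomes $-\frac{a}{b}\SUMS{2}{a}{b}$ and the remaining piece becomes $-\frac{a(3-a)}{6b}(b-1)(2b-1)$. Multiplying the whole equation through by $b$ then moves the $-\frac{a}{b}\SUMS{2}{a}{b}$ term to the left, so that the left-hand side reads $b\SUMS{2}{b}{a}+a\SUMS{2}{a}{b}$, which already matches the symmetric left-hand side of the claimed identity.

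It then remains to verify that the surviving right-hand terms collapse into the asserted product. Both leftover terms share the common factor $(b-1)(2b-1)/6$, and the bracket multiplying it is $(1+a^2)-a(3-a)$. The only genuine step of the argument is the quadratic factorization $(1+a^2)-a(3-a)=2a^2-3a+1=(a-1)(2a-1)$; once this is observed, the right-hand side reads $\frac{1}{6}(a-1)(2a-1)(b-1)(2b-1)$, which is exactly equation \ref{eq:reciprocity_S2}.

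I do not expect any real obstacle here: the identity is forced by the two prior equations and reduces to one substitution followed by a single factorization of a quadratic in $a$. A useful consistency check is the manifest symmetry of the result under $a\leftrightarrow b$, which the derivation must respect even though the intermediate equations \ref{eq:rel_S2ba_Wba} and \ref{eq:rel_Wba_S2ab} treat $a$ and $b$ asymmetrically.
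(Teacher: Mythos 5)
Your proposal is correct and is exactly the paper's own proof: the paper's argument is precisely the substitution of equation \ref{eq:rel_Wba_S2ab} into equation \ref{eq:rel_S2ba_Wba}, and your algebra (clearing the factor $b$, then factoring $(1+a^2)-a(3-a)=2a^2-3a+1=(a-1)(2a-1)$) carries it out in full detail. No gap; you have simply written out the computation the paper leaves implicit.
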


\begin{proof} In equation \ref{eq:rel_S2ba_Wba} we replace $\SUMW{b}{a}$ by its expression from equation \ref{eq:rel_Wba_S2ab}.
\end{proof}

\begin{theorem}[Reciprocity law for $\mathbf{W}$]\label{thm:reciprocity_W} If $a$ and $b$ are positive coprime integers then
\begin{equation}
a^2\SUMW{b}{a}+b^2\SUMW{a}{b}=\frac{ab}{12}\left((1+a^2)(1+b^2)-ab(a-3)(b-3)\right)
\label{eq:reciprocity_W}
\end{equation}
\end{theorem}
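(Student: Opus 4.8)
The plan is to derive the reciprocity law for $\mathbf{W}$ directly from the relation between $\SUMW{b}{a}$ and $\SUMS{2}{a}{b}$ established in equation \ref{eq:rel_Wba_S2ab}, combined with the reciprocity law for $\mathbf{S}_2$ already proven in Theorem \ref{thm:reciprocity_S2}. The strategy is purely algebraic: express both $\SUMW{b}{a}$ and $\SUMW{a}{b}$ in terms of second-power floor sums, substitute into the left-hand side of \ref{eq:reciprocity_W}, and let the $\mathbf{S}_2$ reciprocity law collapse the mixed terms into the stated closed form.

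First I would recall equation \ref{eq:rel_Wba_S2ab}, which reads $\SUMW{b}{a} = \frac{b}{2}\SUMS{2}{a}{b} + \frac{b}{12}(b-1)(2b-1)(3-a)$, and write down its companion obtained by swapping the roles of $a$ and $b$, namely $\SUMW{a}{b} = \frac{a}{2}\SUMS{2}{b}{a} + \frac{a}{12}(a-1)(2a-1)(3-b)$. This swap is legitimate precisely because $a$ and $b$ are positive and coprime, so the hypotheses of the derivation leading to \ref{eq:rel_Wba_S2ab} hold symmetrically. Multiplying the first identity by $a^2$ and the second by $b^2$ and adding, I would obtain
\begin{equation*}
a^2\SUMW{b}{a}+b^2\SUMW{a}{b} = \frac{a^2 b}{2}\SUMS{2}{a}{b} + \frac{a b^2}{2}\SUMS{2}{b}{a} + \frac{a^2 b}{12}(b-1)(2b-1)(3-a) + \frac{a b^2}{12}(a-1)(2a-1)(3-b).
\end{equation*}

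The key step is to isolate the factor $\frac{ab}{2}\bigl(a\SUMS{2}{a}{b}+b\SUMS{2}{b}{a}\bigr)$ in the first two terms and invoke Theorem \ref{thm:reciprocity_S2}, which replaces the bracket by $\frac{1}{6}(a-1)(2a-1)(b-1)(2b-1)$. After this substitution the entire right-hand side becomes a polynomial in $a$ and $b$ alone, and what remains is to verify that it equals $\frac{ab}{12}\bigl((1+a^2)(1+b^2)-ab(a-3)(b-3)\bigr)$. This final polynomial identity is the main obstacle, though it is a routine rather than a conceptual one: it amounts to expanding $\frac{ab}{12}(a-1)(2a-1)(b-1)(2b-1) + \frac{a^2 b}{12}(b-1)(2b-1)(3-a) + \frac{a b^2}{12}(a-1)(2a-1)(3-b)$ and confirming it collapses to the target. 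I would factor out $\frac{ab}{12}$ throughout and check the remaining integer-coefficient polynomial identity in the two variables, which I expect to verify after collecting terms. Since both the $\mathbf{S}_2$ reciprocity and equation \ref{eq:rel_Wba_S2ab} are already available, no new combinatorial input is needed and the proof reduces entirely to this bookkeeping.
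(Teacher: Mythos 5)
Your proof is correct --- the bookkeeping you defer does close: after factoring out $\frac{ab}{12}$, both sides reduce to the polynomial $1+a^2+b^2-9ab+3a^2b+3ab^2$ --- but your route is not quite the paper's. The paper never invokes Theorem \ref{thm:reciprocity_S2} here: it takes a single instance of equation \ref{eq:rel_Wba_S2ab} and substitutes into it the $a\leftrightarrow b$ swap of equation \ref{eq:rel_S2ba_Wba} (the expression of $\SUMS{2}{a}{b}$ in terms of $\SUMW{a}{b}$), which simultaneously eliminates $\mathbf{S}_2$ and produces the desired linear relation between $\SUMW{b}{a}$ and $\SUMW{a}{b}$. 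You instead use \emph{two} instances of \ref{eq:rel_Wba_S2ab} (original and swapped) and let the already-proved $\mathbf{S}_2$ reciprocity cancel the floor sums. Your version has a structural merit: it exhibits the reciprocity law for $\mathbf{W}$ as a formal consequence of the one for $\mathbf{S}_2$, with \ref{eq:rel_Wba_S2ab} acting as a transfer identity. The paper's version has a technical merit: it only ever swaps \ref{eq:rel_S2ba_Wba}, whose derivation carries no assumption on the relative size of $a$ and $b$, whereas the paper derives \ref{eq:rel_Wba_S2ab} under the announced normalization $0<a<b$. Since your argument needs \ref{eq:rel_Wba_S2ab} in both orderings at once, you cannot retreat to ``without loss of generality $a<b$'' plus the symmetry of the conclusion; you must justify the swapped instance itself. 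Your one-sentence justification is correct in substance --- the block decomposition in the paper's derivation of \ref{eq:rel_Wba_S2ab} never actually uses $a<b$, and alternatively the swapped instance can be rederived from the swapped \ref{eq:rel_S2ba_Wba} combined with Theorem \ref{thm:reciprocity_S2}, both already available at this point --- but given the paper's explicit normalization, this is the one step of your proposal that deserves more than a brief appeal to symmetric hypotheses.
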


\begin{proof} In equation \ref{eq:rel_S2ba_Wba} we swap $a$ and $b$ and inject the expression of $\SUMS{2}{a}{b}$ in equation \ref{eq:rel_Wba_S2ab}.
\end{proof}

\begin{theorem}[Reciprocity law for $\mathbf{S}_3$]\label{thm:reciprocity_S3} If $a$ and $b$ are positive coprime integers then
\begin{equation}
a(a-1) \SUMS{3}{a}{b} + b(b-1) \SUMS{3}{b}{a} = \frac{1}{4}(a-1)^2(b-1)^2\left((a-1)(b-1)+ab\right)
\label{eq:reciprocity_S3}
\end{equation}
\end{theorem}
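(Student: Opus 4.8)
The plan is to build the combination $a(a-1)\SUMS{3}{a}{b}+b(b-1)\SUMS{3}{b}{a}$ directly from the expression of $\SUMS{3}{b}{a}$ in terms of $\SUMW{b}{a}$ already obtained in equation \ref{eq:rel_S3ba_Wba}, and then eliminate the $\mathbf{W}$ contribution using the reciprocity law for $\mathbf{W}$ (Theorem \ref{thm:reciprocity_W}). First I would write down equation \ref{eq:rel_S3ba_Wba} as is, giving $\SUMS{3}{b}{a}$ as a polynomial in $a,b$ minus a multiple of $\SUMW{b}{a}$, and then write the companion identity obtained by swapping $a\leftrightarrow b$, namely
\begin{equation*}
\SUMS{3}{a}{b}=\frac{1}{4a}(a-1)(b-1)\left((a-1)(1+b^2)+2ab\right)-\frac{3}{a^2}b(b-1)\SUMW{a}{b}.
\end{equation*}

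Next I would multiply the first by $b(b-1)$ and the second by $a(a-1)$ and add them. The decisive observation is that the two $\mathbf{W}$-terms carry coefficients that share a common factor: multiplying equation \ref{eq:rel_S3ba_Wba} by $b(b-1)$ produces the coefficient $-\tfrac{3(a-1)(b-1)}{ab}\,a^2$ in front of $\SUMW{b}{a}$, while the swapped relation contributes $-\tfrac{3(a-1)(b-1)}{ab}\,b^2$ in front of $\SUMW{a}{b}$. Hence the combined $\mathbf{W}$-part is exactly
\begin{equation*}
-\frac{3(a-1)(b-1)}{ab}\left(a^2\SUMW{b}{a}+b^2\SUMW{a}{b}\right),
\end{equation*}
and substituting Theorem \ref{thm:reciprocity_W} for the parenthesis turns this into the purely polynomial quantity $-\tfrac{1}{4}(a-1)(b-1)\left((1+a^2)(1+b^2)-ab(a-3)(b-3)\right)$. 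This symmetry of the coefficients is the structural heart of the proof and is what makes the Dedekind-type sum disappear.

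What remains is to collect the polynomial pieces coming from the two products together with this last expression, factor out $\tfrac{1}{4}(a-1)(b-1)$, and check that the surviving bracket equals $(a-1)(b-1)\left((a-1)(b-1)+ab\right)$, which reproduces the claimed right-hand side $\tfrac{1}{4}(a-1)^2(b-1)^2\left((a-1)(b-1)+ab\right)$. The only real obstacle is this final polynomial identity in $a$ and $b$; it is not conceptually hard but is the one step requiring genuine bookkeeping, since one must expand $(a-1)^2(1+b^2)+(b-1)^2(1+a^2)+2ab(a-1)+2ab(b-1)-(1+a^2)(1+b^2)+ab(a-3)(b-3)$ and verify it collapses to $(a-1)(b-1)\left((a-1)(b-1)+ab\right)$. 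I expect this to hold as a formal identity, so coprimality of $a$ and $b$ is used only insofar as it is needed for the validity of equation \ref{eq:rel_S3ba_Wba} and of Theorem \ref{thm:reciprocity_W}, and the reciprocity law then follows with no further arithmetic input.
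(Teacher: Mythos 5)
Your proof is correct, but it takes a different route from the paper's. The paper proves this reciprocity law by writing Proposition \ref{prop:rel_Srba_Siba} with $r=3$, substituting the known formulas \ref{eq:formula_S0ba} and \ref{eq:formula_S1ba} for $\mathbf{S}_0$ and $\mathbf{S}_1$ to obtain the clean relation $2\SUMS{3}{b}{a}=3(a-1)\SUMS{2}{b}{a}-\tfrac{1}{2}(a-1)^3(b-1)$, and then eliminating $\mathbf{S}_2$ using the already-proved reciprocity law \ref{eq:reciprocity_S2}; the Dedekind-type object never appears explicitly. You instead start from equation \ref{eq:rel_S3ba_Wba}, which expresses $\mathbf{S}_3$ in terms of $\mathbf{W}$, and eliminate $\mathbf{W}$ via Theorem \ref{thm:reciprocity_W}. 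Your key structural observation is sound: multiplying \ref{eq:rel_S3ba_Wba} by $b(b-1)$ and its $a\leftrightarrow b$ swap by $a(a-1)$ does produce the common factor $-\tfrac{3(a-1)(b-1)}{ab}$ in front of $a^2\SUMW{b}{a}+b^2\SUMW{a}{b}$, and I have checked that the remaining bracket
\begin{equation*}
(a-1)^2(1+b^2)+(b-1)^2(1+a^2)+2ab(a-1)+2ab(b-1)-(1+a^2)(1+b^2)+ab(a-3)(b-3)
\end{equation*}
does collapse to $2a^2b^2-3a^2b-3ab^2+5ab+a^2+b^2-2a-2b+1=(a-1)(b-1)\left((a-1)(b-1)+ab\right)$, as required. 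The two routes are logically close (equation \ref{eq:rel_S3ba_Wba} is itself derived from Proposition \ref{prop:rel_Srba_Siba} and \ref{eq:rel_S2ba_Wba}), but the trade-off is real: the paper's elimination through $\mathbf{S}_2$ keeps the polynomial bookkeeping in low degree, while your elimination through $\mathbf{W}$ mirrors the proofs of Theorems \ref{thm:reciprocity_S2} and \ref{thm:reciprocity_W} (making $\mathbf{W}$ the single pivot for all three reciprocity laws) at the cost of a heavier degree-four verification.
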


\begin{proof} In equation \ref{eq:rel_Srba_Siba} for $r=3$, we replace $\mathbf{S}_0$ and $\mathbf{S}_1$ according to their formulas in equations \ref{eq:formula_S0ba} and \ref{eq:formula_S1ba} and get an equation between $\mathbf{S}_2$ and $\mathbf{S}_3$. With this equation and the reciprocity law for $\mathbf{S}_2$ in \ref{eq:reciprocity_S2} we easily get equation \ref{eq:reciprocity_S3}.
\end{proof}

\subsection{A proof of Dedekind's reciprocity law}\label{sec:proof_Dedekind_reciprocity_law}

The proof of Dedekind's reciprocity law\footnote{Not to be mistaken with the Quadratic reciprocity law} that we give here is in essence the same as the one given in \cite{zagier1973higher} page 153 although it is presented differently.
In the definition equation \ref{eq:def_Dedekind_sum} of the classical Dedekind sum, the summand for $k=a$ is equal to 0. For $0<k<a$ both $\frac{kb}{a}$ and $\frac{k}{a}$ are not in $\mathbb{Z}$

\begin{align*}
s(b,a)
&= \sum_{k=1}^{a-1} \left(\frac{kb\mod a}{a}-\frac{1}{2}\right)\left(\frac{k}{a}-\frac{1}{2}\right)\\
&= \frac{1}{a^2}\SUMW{a}{b}-\frac{1}{4}(a-1)
\end{align*}

That is

\begin{equation}
\SUMW{a}{b}=a^2 \left( s(b,a)+ \frac{a-1}{4} \right)
\label{eq:rel_Wab_sba}
\end{equation}

This reciprocity law satisfied by the classical Dedekind sum results from the reciprocity law satisfied by $\mathbf{W}$ (equation \ref{eq:reciprocity_W}) and the relation between $\mathbf{W}$ and the classical Dedekind sum (equation \ref{eq:rel_Wab_sba}).
Combining these two equations yields

\begin{multline*}
a^2b^2 \left( s(b,a)+ \frac{a-1}{4} \right) + a^2b^2 \left( s(a,b)+ \frac{b-1}{4} \right) = \\ \frac{ab}{12}\left((1+a^2)(1+b^2)-ab(a-3)(b-3)\right)
\end{multline*}

From where we get the reciprocity law for Dedekind sums.

\begin{equation*}
s(b,a)+s(a,b) = -\frac{1}{4}+\frac{1}{12}\left(\frac{a}{b}+\frac{1}{ab}+\frac{b}{a}\right)
\end{equation*}

\subsection{A formula for the classical Dedekind sum}

From equation \ref{eq:rel_Wab_sba} we have

\begin{equation*}
s(b,a) = \frac{\SUMW{a}{b}}{a^2}-\frac{a-1}{4}
\end{equation*}

This equation together with equation \ref{eq:Wab_fn_ui} gives a closed-form formula for $s(b,a)$ as a function of the remainders obtained with the Euclidean algorithm applied to $u_0=b$ and $u_1=a$ (see section \ref{sec:Wab_Euclidean_algorithm}).

\section{Expression of $\SUMS{r}{b}{a}$ as a function of generalized Dedekind sums}\label{sec:Srba_fn_gen_DedekindSums}

\begin{proposition}\label{prop:Srba_rel_Dedekind_sums} for positive coprime integers $a$ and $b$ and for positve integer $r$ we have the following expression for $\SUMS{r}{b}{a}$
\begin{equation}
\SUMS{r}{b}{a} = \frac{b}{2^r}\sum_{\substack{u+v+w=r\\u,v,w\geq 0}}\binom{r}{u,v,w}\left(\frac{a}{b}\right)^u\left(\frac{-1}{b}\right)^v
(a-1)^w \delta(b;\underbrace{1,\ldots,1}_{u\textrm{ times}},\underbrace{a,\ldots ,a}_{v\textrm{ times}})
\label{eq:Srba_rel_Dedekind_sums}
\end{equation}
\end{proposition}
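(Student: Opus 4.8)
The plan is to reduce everything to a single pointwise identity expressing the floor as an affine combination of two Dedekind symbols, then raise that identity to the $r$-th power, expand by the multinomial theorem, and sum over $k$.

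First I would note that since $\gcd(a,b)=1$ and $0<k<b$, neither $\frac{k}{b}$ nor $\frac{ka}{b}$ lies in $\mathbb{Z}$, so $\doublepariii{\frac{k}{b}}=\frac{k}{b}-\frac12$ and $\doublepariii{\frac{ka}{b}}=\FRA{\frac{ka}{b}}-\frac12$. Combining these with $\ENT{x}=x-\FRA{x}$ yields the key identity
\begin{equation*}
\ENT{\frac{ka}{b}} = a\doublepariii{\frac{k}{b}} - \doublepariii{\frac{ka}{b}} + \frac{a-1}{2},
\end{equation*}
valid for all $0<k<b$, as one checks immediately by substituting $a\doublepariii{\frac{k}{b}}=\frac{ka}{b}-\frac a2$ and $\doublepariii{\frac{ka}{b}}=\FRA{\frac{ka}{b}}-\frac12$ into the right-hand side.

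Next I would raise this identity to the power $r$ and apply the multinomial theorem to the three summands $a\doublepariii{\frac{k}{b}}$, $-\doublepariii{\frac{ka}{b}}$ and $\frac{a-1}{2}$, giving
\begin{equation*}
\ENT{\frac{ka}{b}}^r = \sum_{\substack{u+v+w=r\\u,v,w\geq 0}}\binom{r}{u,v,w}a^u(-1)^v\left(\frac{a-1}{2}\right)^w\doublepariii{\frac{k}{b}}^u\doublepariii{\frac{ka}{b}}^v.
\end{equation*}
Summing over $k$ from $1$ to $b-1$ and interchanging the two finite sums, the $k$-dependent factor becomes $\sum_{k=1}^{b-1}\doublepariii{\frac{k}{b}}^u\doublepariii{\frac{ka}{b}}^v$, which by definition \ref{eq:def_generalized_Dedekind_sum}, read with $u$ coefficients equal to $1$ and $v$ equal to $a$, is precisely $\frac{1}{2^{u+v}b^{u+v-1}}\delta(b;\underbrace{1,\ldots,1}_{u},\underbrace{a,\ldots,a}_{v})$. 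Here coprimality of $a$ with $b$ is exactly what makes the arguments $a$ admissible in Zagier's definition.

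Finally I would reconcile the constants. The target summand in \ref{eq:Srba_rel_Dedekind_sums} carries the factor $\frac{b}{2^r}\left(\frac ab\right)^u\left(\frac{-1}{b}\right)^v(a-1)^w = \frac{a^u(-1)^v(a-1)^w}{2^r}b^{1-u-v}$ in front of $\binom{r}{u,v,w}\delta$, while my expansion carries $a^u(-1)^v\left(\frac{a-1}{2}\right)^w\frac{1}{2^{u+v}b^{u+v-1}}$ in front of the same quantity; since $2^w2^{u+v}=2^r$ and $b^{1-u-v}=b^{-(u+v-1)}$ these agree term by term, and summing over all triples recovers \ref{eq:Srba_rel_Dedekind_sums}. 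The computation is elementary throughout; the one delicate point is the degenerate triple $u=v=0$ (so $w=r$), where the product over coefficients is empty. There one must adopt the convention $\delta(b;\,)=2^0b^{-1}\sum_{k=1}^{b-1}1=\frac{b-1}{b}$ and check that it reproduces the expected contribution $\frac{(b-1)(a-1)^r}{2^r}$; handling this boundary case consistently with Zagier's definition is the main, if minor, obstacle.
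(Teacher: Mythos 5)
Your proposal is correct and matches the paper's own proof essentially line for line: the paper likewise writes $\ENT{\frac{ka}{b}}=\left(\frac{ka}{b}-\frac{a}{2}\right)-\left(\FRA{\frac{ka}{b}}-\frac{1}{2}\right)+\frac{a-1}{2}$, which is exactly your pointwise identity $a\doublepariii{\frac{k}{b}}-\doublepariii{\frac{ka}{b}}+\frac{a-1}{2}$, then expands by the trinomial theorem and matches the normalization $2^{u+v}b^{u+v-1}$ from Zagier's definition. Your explicit treatment of the empty-product case $u=v=0$ is a welcome detail that the paper's proof leaves implicit (it only addresses $\delta(b;\emptyset)$ afterwards, in the application to $\SUMS{1}{b}{a}$).
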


\begin{proof} We transform $\SUMS{r}{b}{a}$ using the trinomial expansion and recognize generalized Dedekind sums (equation \ref{eq:def_generalized_Dedekind_sum}) in the expression.
\begin{align*}
\SUMS{r}{b}{a} 
&= \sum_{k=1}^{b-1}\left(\frac{ka}{b}-\frac{a}{2}-\left(\left\{\frac{ka}{b}\right\}-\frac{1}{2}\right)+\frac{a-1}{2}\right)^r \\
&= \sum_{k=1}^{b-1} \sum_{\substack{u+v+w=r\\u,v,w\geq 0}}\binom{r}{u,v,w}a^u\left(\frac{k}{b}-\frac{1}{2}\right)^u(-1)^v\left(\left\{\frac{ka}{b}\right\}-\frac{1}{2}\right)^v \left(\frac{a-1}{2}\right)^w \\
&=  \sum_{\substack{u+v+w=r\\u,v,w\geq 0}}\binom{r}{u,v,w}a^u(-1)^v \left(\frac{a-1}{2}\right)^w \frac{2^{u+v}b^{u+v-1}}{2^{u+v}b^{u+v-1}} \sum_{k=1}^{b-1} \doublepariii{\frac{k}{b}}^u\doublepariii{\frac{ka}{b}}^v
\end{align*}
\end{proof}

The formula gives directly the results \ref{eq:formula_S1ba}, \ref{eq:rel_S2ba_Wba} and \ref{eq:rel_S3ba_Wba} given previously. We only give its application to get the formula for $\SUMS{1}{b}{a}$ as an example. We know that the generalized Dedekind sum \ref{eq:def_generalized_Dedekind_sum} is zero when $n$ is odd. Therefore the only possibility is $(u,v,w)=(0,0,1)$ then equation \ref{eq:Srba_rel_Dedekind_sums} becomes

\begin{equation*}
\SUMS{1}{b}{a} = \frac{b}{2} (a-1) \delta(b;\emptyset)
\end{equation*}

But

\begin{align*}
\delta(b;\emptyset)
&= 2^0b^{-1}\sum_{k=1}^{b-1}\prod_{a\in \emptyset} \doublepariii{\frac{ka}{b}} \\
&= \frac{(b-1)}{b}
\end{align*}

Hence equation \ref{eq:Srba_rel_Dedekind_sums} gives another proof of

\begin{equation*}
\SUMS{1}{b}{a} = \frac{(a-1)(b-1)}{2}
\end{equation*}

\section{Conclusion}

In this paper we have given closed-form formulas for $\mathbf{S}_2$, $\mathbf{S}_3$, $\mathbf{W}$ and the classical Dedekind sums. In addition we have shown the reciprocity laws that these expressions satisfy.  
 In the last section we have shown how $\mathbf{S}_r$ depend on generalized Dedekind sums through equation \ref{eq:Srba_rel_Dedekind_sums}. For $r\geq 4$ there is more than one Dedekind sum involved in the formula of $\mathbf{S}_r$ making the analysis more difficult than it is for $\mathbf{S}_2$ and $\mathbf{S}_3$ where only one Dedekind sum is involved, however that could probably be further investigated.

\section*{Acknowledgements} 

I would like to thank William Gasarch for kindly accepting to peer review this paper and for providing insightful comments. I also thank my wife Natallia for her continuous support.

\bibliographystyle{plain}
\bibliography{biblio}

\end{document}